\def \r{\mathbb R}
\def \({\langle}
\def \){\rangle}
\DeclareMathOperator{\il}{l\ell} 
\DeclareMathOperator{\isin}{lsin}
\newtheorem{theorem}{Theorem}[section]
\newtheorem{proposition}[theorem]{Proposition}
\newtheorem{corollary}[theorem]{Corollary}
\theoremstyle{remark}
\newtheorem{remark}[theorem]{Remark}
\theoremstyle{definition}
\newtheorem{definition}[theorem]{Definition}
\newtheorem{example}[theorem]{Example}
\newtheorem{problem}{Problem}
\title{Continued fractions and the second Kepler law}
\author{Oleg Karpenkov}
\date{9 October 2009}
\thanks{The work is partially supported by RFBR SS-709.2008.1
grant and by FWF grant No.~S09209.}
\keywords{Semidiscrete surfaces, flexibility, infinitesimal
flexibility}
\email[Oleg N. Karpenkov]{karpenkov@tugraz.at}
\address{TU Graz /Kopernikusgasse 24, A 8010 Graz, Austria/}
\begin{document}
\input{epsf}
\begin{abstract}
In this paper we introduce a link between geometry of ordinary
continued fractions and trajectories of points that moves
according to the second Kepler law. We expand geometric
interpretation of ordinary continued fractions to the case of
continued fractions with arbitrary elements.
\end{abstract}
\maketitle

\tableofcontents

\section*{Introduction}

In classical geometry of numbers the elements of an ordinary
continued fraction for a real number $\alpha\ge 1$ are obtained
from a {\it sail} (i.e. a broken line bounding the convex hull of
all points with integer coefficients in certain cone). In present
paper we find broken lines generalizing sails to the case of
continued fractions with arbitrary elements. This in its turn
leads to the definition of ``infinitesimal'' continued fractions,
whose sails would be differentiable curves. Such ``infinitesimal''
continued fractions are defined by two density functions: areal
and angular densities.

The areal density function has a remarkable physical meaning.
Consider an observer at the origin and let the body move along the
curve with the velocity inverse to the areal density function.
Then the body moves according to the second Kepler law with
respect to the observer, i.e. with constant sector area velocity.

\vspace{2mm}

This paper is organized as follows. In the first section we study
the classical case of ordinary continued fractions. In Section~2
we expand the notion of the sail to the case of continued fraction
with arbitrary elements. Further we show how to write continued
fractions starting with broken lines. We generalize the proposed
construction of sails to the case of curves, give an analog of
continued fractions, and show several examples in Section~3.

\vspace{2mm}

For a nice reference to general theory of continued fractions we
suggest the book~\cite{Khin}. Several works are devoted to
geometry of continued fractions (e.g. \cite{Irw}, \cite{Sta}) and
to their generalizations to multidimensional case (\cite{Arn2},
\cite{Kor3}, \cite{Kar1}, etc). Notice that the case of broken
lines with integer edges discussed in~\cite{KarTrig} is a
particular subcase of geometric definitions introduced in Section
2.

\vspace{2mm}

The author is grateful to V.~I.~Arnold for constant attention to
this work, and to Technische Universit\"at Graz for hospitality
and excellent working conditions.

\section{Geometry of ordinary continued fractions}

In this section we briefly introduce geometric aspects of ordinary
continued fractions.

We start with general notions of continued fractions. For
arbitrary sequence of real numbers $(a_0, a_1, \ldots)$ the {\it
continued fraction} is an expression
$$
a_0+\frac{1}{a_1+\frac{\displaystyle 1}{\displaystyle a_2+\ldots}}
$$
denoted $[a_0,a_1,a_2,\ldots]$. In case of a finite sequence we
get some real number (or sometimes $\infty$). In case of infinite
continued fraction the expression means a limit of a sequence
$([a_0,\ldots, a_n])$ while $n$ tends to infinity. Notice that
such limit does not exist for all sequences. A continued fraction
is called {\it ordinary} if $a_0$ is integer and the rest elements
are positive integers. A finite continued fraction is {\it odd}
({\it even}) if it contains odd (even) elements.

\begin{proposition}
A rational number has a unique odd and a unique even continued
fractions.
\\
An irrational number has a unique infinite continued fraction.
\qed
\end{proposition}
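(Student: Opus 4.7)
The plan is to handle the rational and irrational cases separately, in each case proving existence by an algorithm and uniqueness by showing one can recover the elements $a_i$ from the value of the continued fraction.

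For the rational case, the existence of at least one ordinary continued fraction comes from the Euclidean algorithm applied to $\alpha = p/q$: set $a_0 = \lfloor p/q \rfloor$, replace $p/q$ by $q/(p - a_0 q)$, and iterate. The process terminates because the successive remainders form a strictly decreasing sequence of nonnegative integers, and it produces a canonical expansion $[a_0; a_1, \ldots, a_n]$ with $a_n \ge 2$ whenever $n \ge 1$. To obtain a continued fraction of the opposite parity of length, apply the identity
\[
[a_0; a_1, \ldots, a_{n-1}, a_n] = [a_0; a_1, \ldots, a_{n-1}, a_n - 1, 1],
\]
valid when $a_n \ge 2$. This gives existence of both an odd and an even representation.

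For uniqueness in the rational case, the key lemma is that if $n \ge 1$, $a_i \ge 1$ for $i \ge 1$, and we are not in the exceptional situation $(n,a_1)=(1,1)$, then $[a_1; a_2, \ldots, a_n] > 1$, so that $\lfloor [a_0; a_1, \ldots, a_n] \rfloor = a_0$. Consequently $a_0$ is recovered as the floor of the continued fraction, and $[a_1; \ldots, a_n]$ as the reciprocal of the fractional part, allowing one to proceed by induction on $n$. The only ambiguity is the terminal step, where $[a_k] = a_k$ and $[a_k - 1; 1] = a_k$ both represent the same integer but have opposite parities in length. Fixing the parity therefore forces a unique terminal choice and hence a unique continued fraction throughout.

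For the irrational case, existence comes from the Gauss-type map $\alpha_{i+1} = 1/(\alpha_i - \lfloor \alpha_i \rfloor)$: since each $\alpha_i$ remains irrational, the fractional part is never zero, and the process generates an infinite sequence $(a_0, a_1, \ldots)$ with $a_i \ge 1$ for $i \ge 1$. Convergence $[a_0; a_1, \ldots, a_n] \to \alpha$ follows from the standard convergent estimate $|\alpha - p_n/q_n| < 1/(q_n q_{n+1})$ together with $q_n \to \infty$. Uniqueness is easier here than in the rational case: the floor of an infinite ordinary continued fraction is always $a_0$, and stripping recursively recovers each $a_i$. The main obstacle is the bookkeeping in the rational uniqueness argument — specifically, verifying that the \emph{only} way two ordinary continued fractions for the same rational can differ is by the terminal substitution $[\ldots, a_n] = [\ldots, a_n - 1, 1]$, which is precisely what makes the parity constraint pin down a unique representative.
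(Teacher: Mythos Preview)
Your argument is correct and is the standard textbook proof of this classical fact. Note, however, that the paper does not actually prove this proposition: it is stated with a terminal \qed\ and no proof environment, i.e.\ it is quoted as a known result (the paper later refers the reader to Khinchin's book for general background on continued fractions). So there is nothing in the paper to compare your approach against; you have simply supplied the omitted standard argument.

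One small point of bookkeeping worth tightening: your parity-swap identity $[a_0;a_1,\ldots,a_n]=[a_0;a_1,\ldots,a_n-1,1]$ is stated under the hypothesis $a_n\ge 2$, which is what the Euclidean algorithm guarantees when $n\ge 1$. In the degenerate case $n=0$ (i.e.\ $\alpha$ is an integer) the same identity $[a_0]=[a_0-1;1]$ still produces a valid ordinary continued fraction because the paper's definition allows $a_0$ to be an arbitrary integer, so no lower bound on $a_0$ is needed there. You implicitly rely on this but do not say it; one sentence would close that gap.
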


We continue with several definitions of integer geometry. A point
is said to be {\it integer} if all its coefficients are integers.
The {\it integer length} of a segment $AB$ is the number of
integer points inside the segment plus one, denote it by
$\il(AB)$. The {\it integer sine} of the angle $ABC$ is the index
of the integer sublattice generated by the integer vectors of the
segments $BA$ and $BC$ in the whole lattice, we denote in by
$\isin (ABC)$. For more information on lattice (in particularly
integer) trigonometry we refer to~\cite{KarTrig} and
to~\cite{irrat}.

Let $C$ be a cone with vertex at the origin. Take the convex hull
of all integer points except the origin in $C$. The boundary of
the described convex hull is a broken line together with one or
two rays in case if there are some integer points on the edges of
the cone. The broken line in the boundary of the convex hull is
called the {\it sail} of $C$. (In some literature the sail is the
whole boundary of the convex hull, for us it is more convenient to
exclude the rays from the definition of a sail.)

Consider a positive real number $\alpha$. Denote by $C_\alpha$ the
cone with vertex at the origin and edges $\{(t,0)|t\ge 0\}$ and
$\{(t,\alpha t)|t\ge 0\}$. The sail for $C_\alpha$ is a finite
broken line if $\alpha$ is rational and one-side infinite broken
line if $\alpha$ is irrational.

So let the sail be a broken line $A_0A_1\ldots A_{n}$
($A_0A_1A_2\ldots$). Denote
$$
\begin{array}{l}
a_{2k-1}=\il(A_kA_{k+1}),\\
a_{2k}=\isin(A_{k-1}A_kA_{k+1})
\end{array}
$$
for all admissible $k$. The sequence $(a_0,a_1, \ldots, a_{2n})$
(or $(a_0,a_1,a_2, \ldots)$) is called the {\it lattice
length-sine sequence} for the cone $C_\alpha$ (or {\it
LLS-sequence for short}).

\begin{figure}
$$\epsfbox{7__5.1}$$
\caption{The broken line $A_0A_1A_2$ is the sail for
$C_{7/5}$.}\label{7/5}
\end{figure}

The connection of geometric and analytic properties of
LLS-sequence is introduced by the following theorem.
\begin{theorem}
Let $\alpha\ge 1$ and $(a_0,a_1, \ldots, a_{2n})$ $($or
$(a_0,a_1,a_2, \ldots)$$)$ be the  LLS-sequence for $C_\alpha$.
Then
$$
\alpha=[a_0,a_1,\ldots,a_{2n}] \quad (\hbox{or respectively
$\alpha=[a_0,a_1,\ldots]$}).
$$
\end{theorem}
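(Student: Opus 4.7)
The plan is to proceed by strong induction on $n$, leveraging the fact that the action of $GL(2,\z)$ on $\r^2$ preserves the integer lattice and, consequently, the notions of integer length, integer sine, and sail. The base case $n=0$ is immediate: the LLS-sequence $(a_0)$ has a single element, so the sail reduces to a single segment, forcing $\alpha$ to be a positive integer with $A_0=(1,0)$ and $A_1=(1,\alpha)$; then $a_0=\il(A_0A_1)=\alpha=[a_0]$.

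For the inductive step, one first observes that $A_0=(1,0)$ and $A_1=(1,\lfloor\alpha\rfloor)$, since $A_1$ must be the highest integer point on the line $x=1$ inside $C_\alpha$; so $a_0=\lfloor\alpha\rfloor$. Using $a_0$ together with $a_1=\isin(A_0A_1A_2)$, I would construct an explicit $T\in SL(2,\z)$---the composition of the two elementary shears determined by $a_0$ and $a_1$---which maps the tail $A_1A_2\ldots A_{n+1}$ of the sail of $C_\alpha$ onto the complete sail of a canonically positioned cone $C_\gamma$, sending $A_1\mapsto(1,0)$ and the ray $y=\alpha x$ to the ray $y=\gamma x$. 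By $GL(2,\z)$-invariance, the LLS-sequence of $C_\gamma$ equals $(a_2,a_3,\ldots,a_{2n})$. The inductive hypothesis then gives $\gamma=[a_2,\ldots,a_{2n}]$, and computing the induced M\"obius action of $T$ on slopes yields $\alpha=a_0+1/(a_1+1/\gamma)$; chaining these equalities produces $\alpha=[a_0,a_1,\ldots,a_{2n}]$.

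The main obstacle I anticipate is the bookkeeping required to peel off \emph{two} LLS-elements at each inductive step rather than one: since LLS-sequences always have odd length $2n+1$, no single-shear reduction can shorten the sequence by exactly one element. Consequently $T$ must combine two elementary shears (one associated to the length $a_0$ and one to the sine $a_1$), and one has to verify both that its M\"obius action on $\alpha$ realizes the two-step recursion $\alpha=a_0+1/(a_1+1/\gamma)$ and that it aligns the tail sail of $C_\alpha$ onto the sail of $C_\gamma$ while preserving the interleaving of integer lengths and integer sines.
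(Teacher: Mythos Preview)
Your proposal is correct and follows essentially the same inductive strategy the paper uses: the paper does not prove this theorem directly but defers to Corollary~\ref{geometry2}, whose underlying Theorem~\ref{geometry} is proved by exactly the two-shear reduction you describe, with the very matrix $T=\left(\begin{smallmatrix} a_0a_1+1 & -a_1\\ -a_0 & 1\end{smallmatrix}\right)$ peeling off the pair $(a_0,a_1)$ at each step. Your additional remark that $T\in SL(2,\z)$ preserves the integer lattice, and hence carries the tail of the sail of $C_\alpha$ onto the full sail of $C_\gamma$, is precisely the extra ingredient needed to specialize the general broken-line argument of Theorem~\ref{geometry} to the sail setting of this theorem.
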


\begin{proof}
This theorem is a reformulation of a Proposition~1.7.a
from~\cite{KarTrig} for finite continued fractions and
Theorem~2.7.a from~\cite{irrat} for the infinite continued
fractions. Finite case is also a particular case of
Corollary~\ref{geometry2}. So we skip the proof here.
\end{proof}

\section{Continued fractions with arbitrary coefficients}

In this section we generalize geometry of ordinary continued
fractions to the case of continued fractions with arbitrary
elements. We show a relation between odd or infinite continued
fractions and broken lines in the plane having a selected point
(say, the origin). We conclude this section with a few words about
conditions for a broken line to be closed in terms of elements of
the corresponding continued fraction.

Further we use the following notation. For a couple of vectors $v$
and $w$ denote by $|v\times w|$ the oriented volume of the
parallelogram spanned by the vectors $v$ and $w$.

\subsection{Construction of broken lines from the elements of
continued fractions}\label{algorithm} In this subsection we give a
natural geometric interpretation of an odd or infinite continued
fraction with arbitrary elements. It would be a broken line
defined by the positions of the first vertex and the selected
point $O$, direction of the first edge, and the continued
fraction.

So consider a continued fraction $[a_0,\ldots, a_{2n}]$. We are
also given by the vertex $A_0$, selected point $O$, and the
direction $v$ of the first edge. We construct all the rest
vertices $A_k$ inductively in $k$.

\vspace{1mm}

{\it Base of induction}. For the second vertex we take
$$
A_1=A_0+\lambda v,
$$
where $\lambda$ is defined from the equation $|OA_0\times
OA_1|=a_1$.

\vspace{1mm}

{\it Step of induction.} Suppose now we have the points $A_0,
\ldots, A_k$, for $k\ge 1$, Let us get $A_{k+1}$. Consider a point
$$
P=A_{k}+\frac{a_{2k-1}+1}{a_{2k-1}}A_{k-1}A_{k}.
$$
In other words $P$ is a point in the line $A_{k-1}A_k$ such that
the area $OA_{k}P$ equals 1. Let
$$
Q=P+a_{2k}OA_k.
$$
Finally the point $A_{k+1}$ is defined as follows (see on
Figure~\ref{induction})
$$
A_{k+1}=A_k+a_{2k+1}A_kQ.
$$

\begin{figure}
$$\epsfbox{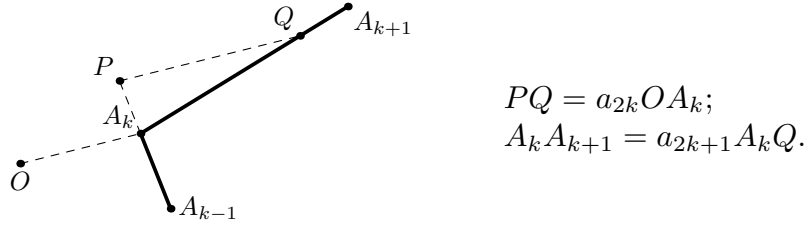}$$
\caption{Construction of $A_{k+1}$.}\label{induction}
\end{figure}

Let us now explain a geometric meaning of the elements of
continued fractions in terms of characteristics of the
corresponding broken line.

\begin{proposition}\label{properties}
The following holds
$$
\begin{array}{l}
a_{2k+1}=|OA_k\times OA_{k+1}|, \quad k=0,\ldots, n,\\
\displaystyle
a_{2k}=\frac{|A_{k}A_{k-1}\times A_{k}A_{k+1}|}{a_{2k-1}a_{2k+1}}, \quad k=1,\ldots, n.\\
\end{array}
$$
\end{proposition}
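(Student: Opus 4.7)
The plan is to prove both identities by simultaneous induction on $k$, placing $O$ at the origin so that $OA_j$ is just the position vector $A_j$ and $|OA_i\times OA_j|=|A_i\times A_j|$. The base case $k=0$ of the first identity is exactly the defining property of $\lambda$ in the construction; the second identity is vacuous at $k=0$.

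For the inductive step I assume $|A_{k-1}\times A_k|=a_{2k-1}$ and unwind the construction to
\[
A_{k+1}-A_k \;=\; a_{2k+1}\,(Q-A_k) \;=\; a_{2k+1}\bigl(a_{2k}\,A_k+(P-A_k)\bigr),
\]
where $P-A_k$ is a scalar multiple of $A_k-A_{k-1}$ chosen so that $|A_k\times(P-A_k)|=1$ (the ``area $OA_kP$ equals $1$'' condition built into the definition of $P$).

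For the first identity, $A_k\times A_{k+1}=A_k\times(A_{k+1}-A_k)$; the $a_{2k}A_k$ contribution dies because $A_k\times A_k=0$, and only $a_{2k+1}\,A_k\times(P-A_k)$ survives, with absolute value $a_{2k+1}$ by the normalization of $P$. For the second identity, I expand $(A_{k-1}-A_k)\times(A_{k+1}-A_k)$ by bilinearity: the $(P-A_k)$-piece vanishes because $P-A_k$ is parallel to $A_{k-1}-A_k$, leaving $a_{2k+1}a_{2k}\,(A_{k-1}-A_k)\times A_k=a_{2k+1}a_{2k}(A_{k-1}\times A_k)$. Taking absolute values and dividing by $a_{2k-1}a_{2k+1}$ returns $a_{2k}$ by the inductive hypothesis.

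The only nontrivial input beyond bookkeeping is the normalization $|A_k\times(P-A_k)|=1$ implicit in the formula for $P$; this is the step that makes the $a_{2k-1}$ produced by the inductive hypothesis cancel cleanly. The remaining effort is sign and orientation tracking---in particular distinguishing $A_kA_{k-1}$ from $A_{k-1}A_k$ inside the various oriented cross products---which I expect to be the main place to be careful, rather than anything conceptually subtle.
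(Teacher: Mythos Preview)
Your argument is correct and follows essentially the same route as the paper: an induction on $k$ whose base case is the defining relation $|OA_0\times OA_1|=a_1$, and whose inductive step unwinds the construction $A_{k+1}-A_k=a_{2k+1}\bigl(a_{2k}\,OA_k+(P-A_k)\bigr)$ and uses bilinearity together with the normalization $|OA_k\times A_kP|=1$ and the hypothesis $|OA_{k-1}\times OA_k|=a_{2k-1}$. The paper walks through the intermediate points $P$ and $Q$ one at a time rather than expanding $A_{k+1}-A_k$ all at once, but the computation is the same; your caution about signs is well placed, since $|v\times w|$ here denotes the \emph{oriented} area and the identities are meant to hold with sign.
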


\begin{proof}
We prove this statement by induction in $k$.

\vspace{1mm}

{\it Base of induction.} From the definition of the point $A_1$ we
get
$$
|OA_0\times OA_1|=a_1.
$$

\vspace{1mm}

{\it Step of induction.} Let the statement be true for $k{-}1$, we
prove it for $k$.

First, we verify the formula for $a_{2k+1}$:
$$
|OA_k\times OA_{k+1}|=a_{2k+1}|OA_{k}\times
OQ|=a_{2k+1}|OA_k\times OP|= \frac{a_{2k+1}}{a_{2k-1}}
|OA_{k-1}\times OA_k|=a_{2k+1}.
$$
The last equality holds by induction.

Second, for $a_{2k}$ we have
$$
\begin{array}{l}
\displaystyle
\frac{|A_kA_{k-1}\times
A_kA_{k+1}|}{a_{2k-1}a_{2k+1}}= \frac{|A_kA_{k-1}\times
A_{k}Q|}{a_{2k-1}}=
|PA_k\times PQ|= a_{2k}|OA_k\times OP|
=
\\
\displaystyle \frac{a_{2k}}{a_{2k-1}}|OA_{k-1}\times OA_k|=a_{2k}.
\end{array}
$$
The step of induction is completed.
\end{proof}

\begin{example}
Let us construct a broken line having the first vector
$A_0=(1,0)$, the direction $v=(1,0)$, and the continued fraction
$[a,b,c]$. Then we have
$$
A_1=(1,a).
$$
Further we find the corresponding points $P$ and $Q$:
$$
P=(1,1+a), \qquad Q=(1+b,1+a+ab).
$$
Finally we get
$$
A_2=(1+bc,a+c+abc).
$$
\end{example}

\subsection{Inverse problem}

Now suppose we have a point $O$ and a broken line $A_0\ldots A_n$
such that for any $k$ the points $O$, $A_k$, and $A_{k+1}$ are not
in a line. Let us extend the definition of the LLS-sequence for
this data.

We use equalities of Proposition~\ref{properties} to define the
elements:
$$
\begin{array}{l}
a_{2k+1}=|OA_k\times OA_{k+1}|, \quad k=0,\ldots, n;\\
a_{2k}=\frac{\displaystyle |A_kA_{k-1}\times
A_{k}A_{k+1}|}{\displaystyle a_{2k-1}a_{2k+1}} ,
\quad k=1,\ldots, n.\\
\end{array}
$$
We call the sequence $(a_0,\ldots, a_{2n})$ the {\it LLS-sequence}
of the broken line with respect to the point $O$, and
$[a_0,\ldots, a_{2n}]$ --- the {\it corresponding continued
fraction}.

\begin{proposition}\label{conjugate}
Let $A_0\ldots A_n$  and $B_0\ldots B_n$ be two broken lines with
LLS-sequences $(a_0,\ldots,a_{2n})$ and $(b_0,\ldots, b_{2n})$
respectively. Suppose the first broken line is taken to the second
by some operator in $SL(2,\r)$ with determinant equals $\lambda$.
Then we have:
$$
\left\{
\begin{array}{l}
a_{2k}=\lambda b_{2k}, \quad k=1,\ldots, n \\
a_{2k+1}=\frac{1}{\lambda} b_{2k+1}, \quad k=0, \ldots, n
\end{array}
\right. .
$$
\end{proposition}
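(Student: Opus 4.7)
The plan is to reduce everything to the single linear-algebra fact that an operator $T$ of determinant $\lambda$ scales the oriented area of every parallelogram by exactly $\lambda$: for all vectors $v,w$ in the plane one has $|Tv\times Tw|=\lambda|v\times w|$. Once this is in hand, the formulas follow by direct substitution into the definitions of $a_{2k+1}$ and $a_{2k}$.

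First I would note that because $T\in SL(2,\r)$ (or more generally a linear map with $\det T=\lambda$) is linear, it fixes the origin and commutes with differences of points. Hence if $B_i=T(A_i)$ for every $i$, then $OB_k=T(OA_k)$ and $B_kB_{k\pm 1}=T(A_kA_{k\pm 1})$. This is the translation between the geometry at $A$ and the geometry at $B$.

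Next, I would apply the cross-product scaling identity to the odd-indexed elements:
\[
b_{2k+1}=|OB_k\times OB_{k+1}|=|T(OA_k)\times T(OA_{k+1})|=\lambda|OA_k\times OA_{k+1}|=\lambda\,a_{2k+1},
\]
which gives $a_{2k+1}=\frac{1}{\lambda}b_{2k+1}$. For the even-indexed elements, applying the same identity to the numerator and using what we just proved in the denominator,
\[
b_{2k}=\frac{|B_kB_{k-1}\times B_kB_{k+1}|}{b_{2k-1}b_{2k+1}}=\frac{\lambda\,|A_kA_{k-1}\times A_kA_{k+1}|}{(\lambda a_{2k-1})(\lambda a_{2k+1})}=\frac{1}{\lambda}\,a_{2k},
\]
that is, $a_{2k}=\lambda\,b_{2k}$, exactly as claimed.

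There is no real obstacle here; the only point worth flagging is that one must use the \emph{oriented} area $|v\times w|$ (as defined in the paper), since otherwise only $|\det T|$ would appear and the sign of $\lambda$ would be lost. Everything else is a single application of the multilinearity of the determinant, so the argument is essentially a one-line verification once the setup is in place.
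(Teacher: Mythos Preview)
Your argument is correct and is exactly the paper's own proof, just written out in full: the paper simply observes that an operator of determinant $\lambda$ multiplies every parallelogram area by $\lambda$, and then says the claim follows directly from the defining formulas of Proposition~\ref{properties}. Your added remarks about linearity fixing $O$ and about using \emph{oriented} area are the right clarifications, but the underlying approach is identical.
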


\begin{proof}
The volume of any parallelogram is multiplied by $\lambda$, then
the statement follows directly from formulas of
Proposition~\ref{properties}.
\end{proof}

\subsection{On geometric meaning of corresponding continued fractions}
For this subsection we fix the point $O$ to be at the origin.

Consider a continued fraction $[a_0,a_1,\ldots, a_{k}]$ as a
rational function in variables $a_0, \ldots , a_{k}$. This
rational function is a ration of two polynomials with non-negative
integer coefficients, denote them by $P_{k}$ and $Q_{k}$. Actually
the polynomials $P_{k}$ and $Q_k$ are uniquely defined by the
condition
$$
\frac{P_k(a_0,a_1,\ldots, a_{k})}{Q_k(a_0,a_1,\ldots,
a_{k})}=[a_0,a_1,\ldots, a_{k}]
$$
and the condition that the coefficients of both polynomials are
non-negative integer coefficients.

\begin{remark}\label{intcoef}
Notice that the last condition is equivalent to the condition that
the polynomial $P_k(a_0,a_1,\ldots, a_{k})$ contains a monomial
$a_0a_1\cdot \ldots \cdot a_k$ with unit coefficient.
\end{remark}

\begin{theorem}\label{geometry}
Let $A_0\ldots A_n$ be a broken line such that $A_0=(1,0)$, and
$A_1=(1,a_0)$ is collinear to the vector $(0,1)$. Suppose its
LLS-sequence is $(a_0,a_1,\ldots, a_{2n})$. Then
$$
A_n=\big(Q_{2n+1}(a_0,a_1,\ldots, a_{2n}),P_{2n+1}(a_0,a_1,\ldots,
a_{2n})\big).
$$
\end{theorem}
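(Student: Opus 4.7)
The plan is to proceed by induction on $n$, showing that each vertex $A_k$ of the broken line coincides with a pair $(Q_m, P_m)$ of continuants of the corresponding truncation of the continued fraction. The base case is immediate: with the standard initial conventions $P_{-2}=0$, $Q_{-2}=1$, $P_{-1}=1$, $Q_{-1}=0$, the hypotheses $A_0=(1,0)$ and $A_1=(1,a_0)$ match the first two $(Q,P)$ pairs in the list of continuants.

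For the inductive step I would unfold the construction of $A_{k+1}$ from Subsection~\ref{algorithm}. Since $O$ is the origin, each point coincides with its position vector, so the auxiliary points $P$ and $Q$ become explicit affine combinations of $A_{k-1}$ and $A_k$. Using Proposition~\ref{properties} to identify $|OA_{k-1}\times OA_k|$ with the appropriate LLS entry, one clears the denominator appearing in the definition of $P$ and verifies the key intermediate fact that $P$ equals the coordinatewise mediant $(Q_{m-1}+Q_{m-2},\, P_{m-1}+P_{m-2})$ of the two previous pairs of continuants. Once this is in place, the formulas $Q = P + a_{?} A_k$ and $A_{k+1} = A_k + a_{?}(Q-A_k)$ translate directly into two successive applications of the continuant recursion $(Q_m,P_m)=a_m(Q_{m-1},P_{m-1})+(Q_{m-2},P_{m-2})$: first to obtain the coordinates of $Q$, then those of $A_{k+1}$. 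Uniqueness of $P_m$ and $Q_m$ as polynomials with non-negative integer coefficients (the paragraph preceding Remark~\ref{intcoef}) then identifies the coordinates of $A_{k+1}$ with the claimed continuants.

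The main obstacle is the mediant identification for $P$. Each construction step in Subsection~\ref{algorithm} introduces three factors --- $(a_?+1)/a_?$ in the definition of $P$, $a_?$ in $Q$, and $a_?$ in $A_{k+1}$ --- but advances the continuant index by only two, so one must verify carefully that the algebra of these three factors collapses into exactly two applications of the one-step continuant recursion. The negative rational coefficient on $A_{k-1}$ that appears after naive expansion is not a real obstacle: it is exactly what is needed to express an odd-indexed continuant through the one-step back-recursion, after which the integer-polynomial identities are recovered.
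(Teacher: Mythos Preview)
Your approach is correct and takes a genuinely different route from the paper. Both argue by induction on the number of vertices, but the inductive steps run in opposite directions. The paper removes the first edge: it applies the $SL(2,\r)$ map
\[
T=\begin{pmatrix} a_0a_1+1 & -a_1\\ -a_0 & 1\end{pmatrix}
\]
sending $A_1$ to $(1,0)$ and the line $A_1A_2$ to $x=1$, uses Proposition~\ref{conjugate} to see that the LLS-sequence of $T(A_1)\ldots T(A_k)$ is the tail $(a_2,\ldots,a_{2k})$, applies the induction hypothesis to that shorter broken line to get $B_k=(q,p)$, and then recovers $A_k=T^{-1}(B_k)$ by unwinding the identity $a_0+1/(a_1+p/q)$ together with Remark~\ref{intcoef}. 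You instead append a vertex at the far end, matching the explicit construction of $A_{k+1}$ from Subsection~\ref{algorithm} against two steps of the continuant recursion, with the odd-indexed intermediate continuant recovered by the back-recursion you flag in your last paragraph. The paper's route emphasizes the $SL(2,\r)$ symmetry and never writes down the continuant recursion; yours is more hands-on and, as a bonus, interprets the auxiliary points $P$ and $Q$ of the construction as mediants of consecutive continuants. One small refinement: with the indexing forced by $A_1=(1,a_0)$, it is the vector $A_kQ=Q-A_k$, rather than $Q$ itself, that equals the odd-indexed continuant pair, after which $A_{k+1}=A_k+a_{?}\,A_kQ$ is literally one step of the recursion---but this is exactly the collapse of three factors into two that you already anticipate.
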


\begin{proof}
We prove this statement by induction in $n$.

\vspace{1mm}

{\it Base of induction.} If the broken line is a segment $A_0A_1$
with LLS-sequence $(a_0)$ then $A_1=(1,a_0)$.

\vspace{2mm}

{\it Step of induction.} Suppose the statement holds for all
broken lines with $k$ vertices, let us prove it for an arbitrary
broken line with $k+1$ vertex.

Consider a broken line $A_0\ldots A_k$ with LLS-sequence $(a_0,
\ldots, a_{2k})$. Let us apply a linear transformation with unit
determinant taking $A_1$ to $(1,0)$ and the line $A_2A_1$ to the
line $x=1$. This transformation is uniquely defined by all these
conditions, it is
$$
T=
\left(
\begin{array}{cc}
a_0a_1+1 & -a_1\\
-a_0 & 1 \\
\end{array}
\right).
$$

Denote the resulting broken line by $B_0B_1\ldots B_k$. By
Proposition~\ref{conjugate} all the elements of the LLS-sequence
for $B_0B_1\ldots B_k$ are the same. By the assumption of
induction we have
$$
B_{k}=\big(Q_{2k-1}(a_2,\ldots, a_{2k}),P_{2k-1}(a_2,\ldots,
a_{2k})\big).
$$
Denote the coordinates of $B_{k}$ by $q$ and $p$ respectively.
Then we have
$$
A_k=T^{-1}(B_k)=\big(p+a_1q, a_0p+(a_0a_1+1)q \big).
$$
The polynomials satisfy
$$
\frac{a_0p+(a_0a_1+1)q}{p+a_1q}=a_0+\frac{1}{a_1+\frac{\displaystyle
p}{\displaystyle q}}=
\frac{P_{2k+1}(a_0,a_1,\ldots, a_{2k})}{Q_{2k+1}(a_0,a_1,\ldots,
a_{2k})}.
$$
Notice that the polynomial $a_0p+(a_0a_1+1)q$ has a unit
coefficient in the monomial $a_0a_1\cdot\ldots \cdot a_{2k}$
coming from $a_0a_1q$. Therefore (see Remark~\ref{intcoef}),
$a_0p+(a_0a_1+1)q$ coincides with $P_{2k+1}(a_0,a_1,\ldots,
a_{2k})$ and $(p+a_1q)$ coincides with $Q_{2k+1}(a_0,a_1,\ldots,
a_{2k})$. So we are done with the step of induction. This
concludes the proof of the theorem.
\end{proof}

In particular we get the following corollary. In the classical
case it forms the basis of geometry of ordinary continued
fractions.

\begin{corollary}\label{geometry2}
Let $A_0\ldots A_n$ be a broken line such that $A_0=(1,0)$, and
$A_0=(1,a_0)$. Suppose that the corresponding continued fraction
is $\alpha=[a_0,a_1,\ldots, a_{2n}]$ and $A_n=(x,y)$. Then
$$
\frac{y}{x}=\alpha.
$$
$($If the corresponding continued fraction has an infinite value,
then $x/y=0$.$)$ \qed
\end{corollary}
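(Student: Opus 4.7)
The plan is to derive this corollary as an essentially immediate consequence of Theorem~\ref{geometry} combined with the defining property of the polynomials $P_k$ and $Q_k$. The hypotheses of the corollary (namely $A_0=(1,0)$ and $A_1=(1,a_0)$ collinear with $(0,1)$) match exactly the hypotheses of Theorem~\ref{geometry}, so I would simply invoke that theorem to conclude
$$
A_n=\bigl(Q_{2n+1}(a_0,a_1,\ldots,a_{2n}),\,P_{2n+1}(a_0,a_1,\ldots,a_{2n})\bigr).
$$
Thus $x=Q_{2n+1}(a_0,\ldots,a_{2n})$ and $y=P_{2n+1}(a_0,\ldots,a_{2n})$.

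Next I would recall the defining identity of the polynomials $P_k,Q_k$ introduced at the start of the subsection, namely
$$
\frac{P_{2n+1}(a_0,\ldots,a_{2n})}{Q_{2n+1}(a_0,\ldots,a_{2n})}=[a_0,a_1,\ldots,a_{2n}]=\alpha.
$$
Substituting gives $y/x=\alpha$ as required.

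For the parenthetical remark about infinite values of the continued fraction, I would argue that $[a_0,\ldots,a_{2n}]=\infty$ precisely when $Q_{2n+1}(a_0,\ldots,a_{2n})=0$ while $P_{2n+1}(a_0,\ldots,a_{2n})\neq 0$; in that case $x=0$ and $y\neq 0$, so $x/y=0$, consistent with the stated convention.

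The only delicate step, and the one I would be most careful about, is to confirm that the vertex $A_1=(1,a_0)$ in the corollary is indeed the same as the $A_1$ produced by the inductive construction of Section~\ref{algorithm} with $a_1=|OA_0\times OA_1|$ as in Proposition~\ref{properties}; this just amounts to noting that $|OA_0\times OA_1|=a_0\cdot 1=a_0$ is already encoded by writing $A_1=(1,a_0)$, so no extra verification is needed and the proof reduces, as claimed, to a one-line appeal to Theorem~\ref{geometry}.
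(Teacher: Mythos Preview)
Your proposal is correct and matches the paper exactly: the corollary carries a bare \qed with no proof, signalling precisely the one-line derivation from Theorem~\ref{geometry} together with the defining relation $P_{2n+1}/Q_{2n+1}=[a_0,\ldots,a_{2n}]$ that you spell out. The sanity check in your final paragraph is not needed (and is mildly tangled by the paper's own indexing quirks), but the main argument is exactly what is intended.
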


This corollary implies the following statement.

\begin{corollary}
Let $A_0\ldots A_n$ and $B_0\ldots B_m$ be two broken lines with
$B_0=A_0$, such that the vector of the first edges either have the
same direction if $a_0/b_0>0$ or opposite otherwise. Suppose the
corresponding continued fractions coincide:
$$
[a_0,\ldots, a_{2n}]=[b_0, \ldots, b_{2m}].
$$
Then the points $A_n$, $B_m$, and the origin $O$ are in a line.
\end{corollary}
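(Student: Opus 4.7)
The plan is to reduce to Corollary~\ref{geometry2} by simultaneously normalizing both broken lines with a single linear map. The hypotheses ($A_0=B_0$, together with the first edges lying on a common line through $A_0$) guarantee the existence of some $T\in GL(2,\r)$ that fixes the origin $O$, sends $A_0=B_0$ to $(1,0)$, and sends the line of first edges onto the vertical line $x=1$. I would apply $T$ to both broken lines simultaneously and write $\lambda=\det T$.

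By Proposition~\ref{conjugate}, the LLS entries rescale by $\lambda^{\pm 1}$ according to their parity. A short telescoping computation based on the identity $[\lambda c_0,\,c_1/\lambda,\,\lambda c_2,\,c_3/\lambda,\ldots]=\lambda[c_0,c_1,c_2,c_3,\ldots]$ then shows that the continued fraction corresponding to each transformed broken line equals $\lambda$ times the original one. Since by hypothesis $[a_0,\ldots,a_{2n}]=[b_0,\ldots,b_{2m}]=:\alpha$, the two transformed continued fractions also coincide, both being equal to $\lambda\alpha$.

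After transformation, each broken line starts at $(1,0)$ and has a vertical first edge, whose orientation (upward or downward) matches the sign of $\lambda a_0$ (respectively $\lambda b_0$). The assumption on the sign of $a_0/b_0$ is precisely what is needed for both transformed broken lines to satisfy the hypotheses of Corollary~\ref{geometry2} with consistent sign conventions. Applying that corollary to each of them, we conclude that $T(A_n)$ and $T(B_m)$ both lie on the line $y=\lambda\alpha\,x$ through $O$ (or, if $\lambda\alpha=\infty$, on the vertical axis $x=0$, by the parenthetical remark in Corollary~\ref{geometry2}). Pulling back by $T^{-1}$, which is linear and therefore fixes $O$ and sends lines through $O$ to lines through $O$, we obtain that $A_n$, $B_m$, and $O$ are collinear.

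The main obstacle is the simultaneous normalization step: one has to check that a single $T$ can put both broken lines into the form required by Corollary~\ref{geometry2} with the correct orientations. The hypothesis that the first edges of the two broken lines point in the same direction precisely when $a_0/b_0>0$ (and opposite otherwise) is exactly the condition that makes this simultaneous normalization possible, and this is the only place where the sign assumption is used in the proof.
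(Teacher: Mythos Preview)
Your approach is correct and essentially the same as the paper's: normalize both broken lines by a single linear map fixing $O$ and then apply Corollary~\ref{geometry2}. The paper streamlines one step by choosing the normalizing operator to have determinant~$1$, so that by Proposition~\ref{conjugate} the LLS-sequences (and hence the continued fractions) are literally unchanged and your $\lambda$-rescaling discussion becomes unnecessary.
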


\begin{proof}
Consider an $SL(2,\r)$-operator taking $A_0$ to $(1,0)$ and $A_1$
to the line $x=1$. By Proposition~\ref{conjugate} the continued
fractions for both broken lines are not changed. Hence by
Corollary~\ref{geometry} the points $A_n$, $B_m$, and the origin
are in a line.
\end{proof}

\begin{remark}\label{cc}({\it On closed curves.}) How to
find that a certain continued fraction defines a closed curve?
From Theorem~\ref{geometry} we see that a broken line defined by
an LLS-sequence $(a_0,a_1,\ldots, a_{2n})$ with $A_0=(1,0)$ and
$A_0A_1$ being collinear to $(0,1)$ is closed if and only if
$$
Q_{2n+1}(a_0,a_1,\ldots, a_{2n})=1 \quad \hbox{and} \quad
P_{2n+1}(a_0,a_1,\ldots, a_{2n})=0.
$$
So, {\it these two polynomial conditions on the elements of the
LLS-sequence are necessary and sufficient conditions for the
broken line to be closed}.

Notice that the condition $P_{n+1}=0$ can be rewritten in the
following nice form
$$
[a_0,a_1,\ldots ,a_{2n}]=0.
$$
The condition $P_{n+1}=0$ was introduced in~\cite{KarTrig} for
certain broken-lines with integer vertices.
\end{remark}

\begin{example}
Let us study an example of broken lines consisting of three edges.
These curves are defined by continued fractions of type
$[a_0,a_1,a_2,a_3,a_4]$. Then the conditions for a broken line to
form a triangle are as follows:
$$
\left\{
\begin{array}{rcl}
a_0a_1a_2a_3a_4+a_0a_1a_2+a_0a_1a_4+a_0a_3a_4+a_2a_3a_4+a_0+a_2+a_4&=&0\\
a_1a_2a_3a_4+a_1a_2+a_1a_4+a_3a_4+1&=&1
\end{array}
\right. .
$$
(See on Figure~\ref{examples}.)

\begin{figure}
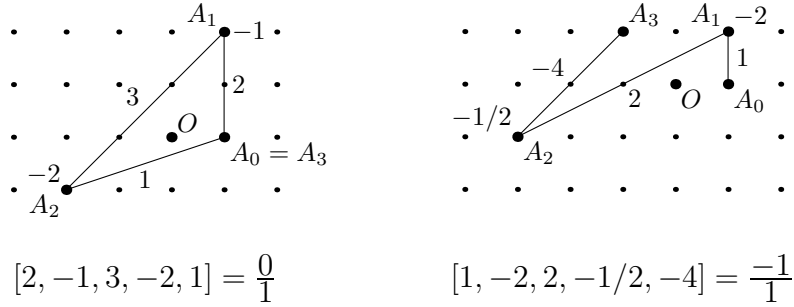

$$
\begin{array}{cc}
\epsfbox{examples.1}&\epsfbox{examples.2}\\
\hbox{$[ 2,-1,3,-2,1]$}=\frac{\displaystyle 0}{ \displaystyle 1}&
\hbox{$[ 1,-2,2,-1/2,-4]$}=\frac{\displaystyle -1}{ \displaystyle 1} \\
\end{array}
$$
\caption{Examples of broken lines and their continued
fractions.}\label{examples}
\end{figure}
\end{example}

There is one problem which is interesting in the frames of this
section. Suppose we have a broken line and two distinct points
$O_1$ and $O_2$. Then we have two LLS-sequences for the same curve
with respect to $O_1$ and $O_2$. {\it Study the conditions on the
initial data $($i.e.,~LLS-sequences, positions of the first points
of the broken lines, and direction of the first vector$)$ that
define congruent broken lines.}

\section{Differentiable curves}

Now let us study what happens if we consider a curve as a broken
line with infinitesimally small segments. It turns out that the
LLS-sequence ``splits'' to a couple of functions which we call
{\it areal and angular densities}. We introduce the necessary
notions and discuss basic properties of these functions. In
particular we show that the areal density is inverse to a velocity
of a point defined by the second Kepler law.

In this section we suppose that the curves has a natural (unit
length) parametrization.

\subsection{Definition of areal and angular densities}

Consider a curve $\gamma$ of class $C^2$ with an arc-length
parameter $t$. Let us define the areal and the angular elements at
a point similar to the discrete case.

\begin{definition}
The {\it areal density} and the {\it angular density} at $t$ are
respectively
$$
A(t)=\lim\limits_{\varepsilon \to 0} \frac{|O\gamma(t)\times
O\gamma(t+\varepsilon)|}{\varepsilon}=|O\gamma(t)\times
\dot{\gamma}(t)|
$$
and
$$
B(t)=\lim\limits_{\varepsilon \to 0}
\frac{|\gamma(t)\gamma(t-\varepsilon)\times
\gamma(t)\gamma(t+\varepsilon)|}{\varepsilon|O\gamma(t-\varepsilon)\times
O\gamma(t)||O\gamma(t)\times O\gamma(t+\varepsilon)|}.
$$
\end{definition}

Let us give geometric interpretations for the functions $A$ and
$B$. We start with $A$.

\begin{proposition}{\bf (Relation with the second Kepler law.)}
Suppose that a body moves by a trajectory of a curve $\gamma$ with
velocity $1/A$. Then the sector area velocity of a body is
constant and equals 1.
\end{proposition}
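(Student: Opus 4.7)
The plan is to compute the sector area velocity directly from the definition by reparametrizing the curve from arc length to the time variable for which the speed equals $1/A$. The key identity I will use is that $A(t) = |O\gamma(t)\times \dot{\gamma}(t)|$ where the dot denotes differentiation with respect to arc length $t$ and $|v\times w|$ is the (oriented) parallelogram area from the notation fixed earlier in the paper.

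First I would set $\tilde\gamma(\tau) = \gamma(t(\tau))$, where the time parameter $\tau$ is chosen so that the speed of the body is $1/A$. Since the original $t$ is arc length, $|\dot\gamma| = 1$, and therefore
\[
\frac{d\tilde\gamma}{d\tau} \;=\; \dot\gamma(t)\,\frac{dt}{d\tau}, \qquad \left|\frac{d\tilde\gamma}{d\tau}\right| = \frac{dt}{d\tau}.
\]
Combining this with the prescribed speed $1/A(t)$ forces
\[
\frac{dt}{d\tau} \;=\; \frac{1}{A(t(\tau))}.
\]

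Second, I would evaluate the sector area velocity at time $\tau$ using the same convention as in the definition of $A$, namely
\[
\lim_{\delta\to 0}\frac{|O\tilde\gamma(\tau)\times O\tilde\gamma(\tau+\delta)|}{\delta} \;=\; \bigl|O\tilde\gamma(\tau)\times \tilde\gamma{\,}'(\tau)\bigr|.
\]
Substituting $\tilde\gamma{\,}'(\tau) = \dot\gamma(t)/A(t)$ and pulling the scalar $1/A(t)$ out of the cross product yields
\[
\bigl|O\gamma(t)\times\dot\gamma(t)\bigr|\cdot\frac{1}{A(t)} \;=\; \frac{A(t)}{A(t)} \;=\; 1,
\]
which is the desired constant value.

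The only subtlety I anticipate is a bookkeeping one: making sure the normalization in the paper's ``sector area velocity'' matches the $|\,\cdot\times\cdot\,|$ convention used to define $A$, rather than the usual geometric triangle area (which would introduce a factor of $1/2$). Since $A(t)$ itself is defined as a parallelogram area divided by $\varepsilon$, the natural matching definition of sector area velocity is the analogous parallelogram-area limit, which is what I use above; with that convention the constant is exactly $1$, as stated. No further analytic input is needed beyond the chain rule and the $C^{2}$ hypothesis, which guarantees that the limits defining $A$ and the sector area velocity exist and agree with the expressions through $\dot\gamma$ and $\tilde\gamma{\,}'$.
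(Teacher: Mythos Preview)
Your argument is correct and is precisely the computation the paper intends: the paper's own proof is the single sentence ``The proof follows directly from the definition,'' and your reparametrization $dt/d\tau = 1/A$ together with $A(t)=|O\gamma(t)\times\dot\gamma(t)|$ is exactly that direct verification. Your observation about the parallelogram-versus-triangle normalization is also on point and consistent with the paper's conventions.
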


\begin{proof}
The proof follows directly from the definition.
\end{proof}

Instead of giving a geometrical interpretation of $B$, we prove
the following formula for $A^2B$. For a given curve $\gamma$
denote by $\kappa(t)$ the signed curvature at point $t$.
\begin{proposition}
Consider a point $\gamma(t)$ of a curve $\gamma$. Let the vectors
$O\gamma(t)$ and $\dot\gamma(t)$ be non-collinear. Then the
following holds.
$$
A^2(t)B(t)=\kappa(t).
$$
\end{proposition}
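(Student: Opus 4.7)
The plan is to compute the limit defining $B(t)$ directly by Taylor expansion, using the hypothesis that $\gamma$ is parametrized by arc-length. Write
\[
\gamma(t\pm\varepsilon) - \gamma(t) = \pm\varepsilon\,\dot\gamma(t) + \tfrac{\varepsilon^2}{2}\ddot\gamma(t) + O(\varepsilon^3),
\]
and expand the bilinear form $|\cdot\times\cdot|$ in the numerator and in each denominator factor of the definition of $B(t)$.

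For the two denominator factors, observe that
\[
|O\gamma(t)\times O\gamma(t\pm\varepsilon)| = \bigl|\gamma(t)\times\bigl(\pm\varepsilon\dot\gamma(t)+\tfrac{\varepsilon^2}{2}\ddot\gamma(t)+\cdots\bigr)\bigr| = \pm\varepsilon\,A(t) + O(\varepsilon^2),
\]
using the definition $A(t)=|O\gamma(t)\times\dot\gamma(t)|$ and the non-collinearity hypothesis (which guarantees $A(t)\neq 0$ so that these factors do not collapse to higher order). Together with the explicit $\varepsilon$ in the denominator, the product in the denominator of $B$ behaves like $\varepsilon\cdot(\varepsilon A)\cdot(\varepsilon A) = \varepsilon^3 A^2(t) + O(\varepsilon^4)$, with the sign correction coming out consistently.

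For the numerator, expand the product
\[
\bigl(-\varepsilon\dot\gamma+\tfrac{\varepsilon^2}{2}\ddot\gamma\bigr)\times\bigl(\varepsilon\dot\gamma+\tfrac{\varepsilon^2}{2}\ddot\gamma\bigr).
\]
The $\varepsilon^2$ term $-\dot\gamma\times\dot\gamma$ vanishes identically; the $\varepsilon^4$ term vanishes likewise. The two $\varepsilon^3$ contributions combine to $\varepsilon^3(\dot\gamma(t)\times\ddot\gamma(t)) + O(\varepsilon^4)$ (up to overall sign). Here I invoke the fact that for an arc-length parametrized planar curve, $|\dot\gamma|=1$ forces $\dot\gamma\cdot\ddot\gamma=0$, and the signed curvature is defined so that $\ddot\gamma=\kappa(t)N(t)$ where $N(t)$ is $\dot\gamma(t)$ rotated by $\pi/2$, giving $\dot\gamma(t)\times\ddot\gamma(t) = \kappa(t)$.

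Dividing, the $\varepsilon^3$ factors cancel and the limit is $\kappa(t)/A^2(t)$, so $A^2(t)B(t)=\kappa(t)$. The computation is routine; the only real care is in bookkeeping signs and orientation conventions consistently between the $2\times 2$ determinant used in the paper's notation $|\cdot\times\cdot|$ and the convention defining the signed curvature $\kappa$. In particular, the second-order terms $\tfrac{\varepsilon^2}{2}\ddot\gamma$ in the Taylor expansions must both be kept, since only their interaction with the first-order terms produces the $\varepsilon^3$ coefficient that survives cancellation with the denominator.
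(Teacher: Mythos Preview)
Your proof is correct and takes essentially the same approach as the paper: both evaluate the defining limit by local expansion at $t$. The paper first cancels $A^2$ against the two denominator factors to reduce $A^2B$ to the single limit $\lim_{\varepsilon\to 0}\varepsilon^{-3}\,|\gamma(t)\gamma(t-\varepsilon)\times\gamma(t)\gamma(t+\varepsilon)|$ and then evaluates it geometrically via chord lengths $\approx\varepsilon$ and $\sin\bigl(\gamma(t-\varepsilon)\gamma(t)\gamma(t+\varepsilon)\bigr)\approx\varepsilon\kappa(t)$, while you Taylor-expand numerator and denominator separately and use $\dot\gamma\times\ddot\gamma=\kappa$; the content is the same.
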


\begin{proof}
We have the following
$$
\begin{aligned}
A^2(t)B(t)&=\lim\limits_{\varepsilon \to 0}\left(
\left( \frac{|O\gamma(t)\times
O\gamma(t+\varepsilon)|}{\varepsilon}\right)^2
\frac{|\gamma(t)\gamma(t-\varepsilon)\times
\gamma(t)\gamma(t+\varepsilon)|}{\varepsilon|O\gamma(t-\varepsilon)\times
O\gamma(t)||O\gamma(t)\times O\gamma(t+\varepsilon)|} \right)
\\
&=\lim\limits_{\varepsilon \to 0}
\frac{|\gamma(t)\gamma(t-\varepsilon)\times
\gamma(t)\gamma(t+\varepsilon)|}{\varepsilon^3}.
\end{aligned}
$$
Notice that
$$
\begin{array}{l}
|\gamma(t)\gamma(t-\varepsilon)|=\varepsilon + o(\varepsilon),
\\
|\gamma(t)\gamma(t+\varepsilon)|=\varepsilon + o(\varepsilon),
\\
\sin (\gamma(t-\varepsilon)\gamma(t)\gamma(t+\varepsilon))=
\varepsilon\kappa(t) + o(\varepsilon).
\end{array}
$$
Therefore, for the volume of the corresponding parallelogram we
get
$$
|\gamma(t)\gamma(t-\varepsilon)\times
\gamma(t)\gamma(t+\varepsilon)|=\varepsilon^3\kappa(t) +
o(\varepsilon).
$$
Hence, $A^2(t)B(t)=\kappa(t)$.
\end{proof}

Now we prove the theorem on finite reconstruction of a curve
(i.e., in some small neighborhood) knowing the areal density and a
starting point. This is analogous to the algorithm that finds a
broken line by the elements of the corresponding continued
fraction described in Subsection~\ref{algorithm}. The significant
difference to the discrete case is that we do not need to know the
angular distribution function.

\begin{theorem}
Suppose we are given by the points $O$ and $\gamma(t_0)$ and the
areal density $A(t_0)$.

--- If $|A(t_0)|>|O\gamma(t_0)|$, then there is no finite curve with the
given data.

--- If $|O\gamma(t_0)|>|A(t_0)|>0$, then the curve is uniquely
defined in some neighborhood of the point $\gamma(t_0)$.
\end{theorem}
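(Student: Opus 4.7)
The plan is to work in polar coordinates $(r,\phi)$ centered at $O$, writing $\gamma(t)=r(t)\bigl(\cos\phi(t),\sin\phi(t)\bigr)$. A short cross-product computation should give the identity $A(t)=r(t)^2\dot\phi(t)$, while the arc-length hypothesis $|\dot\gamma(t)|=1$ yields $\dot r^2+r^2\dot\phi^2=1$. Eliminating $\dot\phi$ between these two relations produces
\[
\dot r(t)^2 \;=\; 1-\frac{A(t)^2}{r(t)^2},
\]
which is the heart of the argument.

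The first assertion then falls out immediately. If a curve existed with $|A(t_0)|>|O\gamma(t_0)|=r(t_0)$, the right-hand side of the displayed identity would be negative at $t_0$, contradicting $\dot r(t_0)^2\ge 0$. Hence no curve can realize the prescribed data.

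For the second assertion I assume $r(t_0)>|A(t_0)|>0$. By continuity $1-A(t)^2/r(t)^2$ stays positive on a neighborhood of $t_0$, so the system rewrites in the normal form
\[
\dot r\;=\;\varepsilon\sqrt{1-A^2/r^2},\qquad \dot\phi\;=\;A/r^2,
\]
with a sign $\varepsilon\in\{\pm1\}$ determined by the initial direction of motion---the continuous analogue of the first-edge direction $v$ fixed in the discrete construction of Subsection~\ref{algorithm}. The right-hand sides are $C^1$ in $(r,\phi)$ on the region $r>|A(t)|$, so the Picard--Lindel\"of theorem furnishes a unique local solution with the initial value determined by $\gamma(t_0)$, from which $\gamma$ is reconstructed.

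The only real subtlety I anticipate is the role of $\varepsilon$: the areal density alone fixes $|\dot r|$ but not its sign, so without further data the reconstruction is unique only up to reflection across the ray $O\gamma(t_0)$. Reading ``the curve'' with the orientation convention implicit in the discrete algorithm (i.e.\ a chosen initial tangent direction) removes this ambiguity and makes uniqueness literal.
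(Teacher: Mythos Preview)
Your approach is essentially identical to the paper's: pass to polar coordinates centered at $O$, obtain the system $r^2\dot\varphi=A$, $\dot r^2+r^2\dot\varphi^2=1$, split into the two branches $\dot r=\pm\sqrt{1-A^2/r^2}$, $\dot\varphi=A/r^2$, and invoke the standard ODE existence/uniqueness theorem on the region $|r|>|A|>0$. You in fact supply more detail than the paper---explicitly arguing the impossibility in the first case from $\dot r^2\ge 0$, and flagging the sign $\varepsilon$ as the continuous counterpart of the initial direction $v$---whereas the paper simply cites the ``main theorem of theory of ordinary differential equations'' and does not comment on the reflection ambiguity.
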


\begin{remark}
Notice that $A^2(t)B(t)$ defines the oriented curvature.
Therefore, if one knows the functions $A$ and $B$ then the curve
is uniquely reconstructed until the time $t_0$ where the vectors
$O\gamma(t_0)$ and $\dot \gamma(t_0)$ are collinear, or in other
words where $|A(t_0)|=0$.
\end{remark}

\begin{proof}
Consider a system of polar coordinates $(r,\varphi)$ with the
origin at the point $O$. To get the curve we should solve the
system of differential equations:
$$
\left\{
\begin{array}{l}
r^2\dot \varphi= A\\
\dot r^2+ r^2\dot \varphi^2  = 1\\
\end{array}
\right..
$$
This system is equivalent to the union of the following two
systems:
$$
\left\{
\begin{array}{l}
\dot \varphi= \frac{A}{r^2}\\
\dot r  = \sqrt{1-\frac{A^2}{r^2}}\\
\end{array}
\right.
\quad \hbox{and} \quad \left\{
\begin{array}{l}
\dot \varphi= \frac{A}{r^2}\\
\dot r  = -\sqrt{1-\frac{A^2}{r^2}}\\
\end{array}
\right..
$$

By the main theorem of theory of ordinary differential equations
(see for instance in~\cite{ODU}) this system has a finite solution
if $|r|>|A|>0$. This concludes the proof.
\end{proof}

Let us say a few words about density functions and their broken
line approximations. Let $\gamma(t)$ be a curve with arclength
parameter $t\in [0,T]$ and densities $A(t)$ and $B(t)$. For an
integer $n$ consider a broken line $\gamma_n=A_{0,n}\ldots
A_{n,n}$ such that $A_{i,n}=\gamma\big(\frac{i}{n}T\big)$. Let the
corresponding LLS-sequence be $(a_{0,n},\ldots,a_{2n,n})$. Denote
by $A_n$ and $B_n$ the following functions
$$
A_n(t)=a_{2\lfloor n t/T\rfloor+1,n}, \quad \hbox{and} \quad
B_n(t)=a_{2\lfloor nt/T\rfloor,n}.
$$

\begin{theorem}\label{convergence}
Let $\gamma$ be in $C^2$. Then the sequences of functions $(A_n)$
and $(B_n)$ pointwise converge to the functions $A$ and $B$
respectively.
\end{theorem}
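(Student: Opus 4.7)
The plan is to derive both claims by direct Taylor expansion of $\gamma$ at the sample point, using the formulas for the LLS-entries of $\gamma_n$ from Proposition~\ref{properties}. Fix $t \in (0,T)$, set $\varepsilon = \varepsilon_n = T/n$ and $k = k_n = \lfloor nt/T \rfloor$, so $k\varepsilon \to t$ and $\varepsilon \to 0$. Since $A_{k,n} = \gamma(k\varepsilon)$, each convergence reduces to a Taylor expansion of $\gamma$ at $k\varepsilon$.

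For $(A_n)$, Proposition~\ref{properties} gives $a_{2k+1,n} = |O\gamma(k\varepsilon) \times O\gamma((k+1)\varepsilon)|$. Because $\gamma \in C^2$, the expansion $\gamma((k+1)\varepsilon) = \gamma(k\varepsilon) + \varepsilon\dot\gamma(k\varepsilon) + O(\varepsilon^2)$ together with bilinearity and alternation of $v \mapsto |v \times \cdot|$ yields
\[
a_{2k+1,n} \;=\; \varepsilon\,|O\gamma(k\varepsilon) \times \dot\gamma(k\varepsilon)| + O(\varepsilon^2) \;=\; \varepsilon A(k\varepsilon) + O(\varepsilon^2).
\]
After the natural rescaling by $n/T = 1/\varepsilon$ (so that the per-step quantity $a_{2k+1,n}$ compares with the density $A$), continuity of $A$, itself a consequence of $\gamma \in C^2$, forces $A_n(t) \to A(t)$.

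For $(B_n)$, Proposition~\ref{properties} gives $a_{2k,n} = |A_{k-1,n}A_{k,n} \times A_{k,n}A_{k+1,n}| / (a_{2k-1,n}\, a_{2k+1,n})$. The numerator is precisely the quantity computed inside the proof of the identity $A^2B = \kappa$: expanding $\gamma(k\varepsilon \pm \varepsilon)$ to second order in $\varepsilon$ gives
\[
|A_{k-1,n}A_{k,n} \times A_{k,n}A_{k+1,n}| \;=\; \varepsilon^3 \kappa(k\varepsilon) + o(\varepsilon^3),
\]
while the denominator, by the previous step, equals $\varepsilon^2 A(k\varepsilon)^2 + o(\varepsilon^2)$. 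Dividing and applying the analogous rescaling gives $B_n(t) \to \kappa(t)/A(t)^2 = B(t)$.

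The main obstacle is that the $B_n$-argument divides by $A$ and therefore requires $A(t) \neq 0$, i.e.\ $O\gamma(t)$ and $\dot\gamma(t)$ non-collinear; at points where $A$ vanishes the formula $B = \kappa/A^2$ itself degenerates, and pointwise convergence there needs a separate higher-order analysis (or simply exclusion of such $t$, consistently with the hypothesis in the proposition on $A^2B = \kappa$). A secondary technical point is to make the $O(\cdot)$ and $o(\cdot)$ terms uniform with respect to the partial-step shift $nt/T - k_n \in [0,1)$; this is immediate from uniform continuity of $\ddot\gamma$ on the compact $[0,T]$.
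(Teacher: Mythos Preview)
Your approach is exactly the one the paper intends: its entire proof is the single sentence ``This follows directly from the definition of density functions and Proposition~\ref{properties}.'' You have simply unpacked that sentence via Taylor expansion, which is the right thing to do.

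Two points are worth recording. First, you correctly flagged that a rescaling by $n/T=1/\varepsilon$ is needed: with $A_n(t)=a_{2\lfloor nt/T\rfloor+1,n}$ taken literally, one gets $A_n(t)=\varepsilon A(k\varepsilon)+O(\varepsilon^2)\to 0$, and similarly $B_n(t)\to 0$. The paper's one-line proof glosses over this; your insertion of the factor $n/T$ is the honest fix, and the theorem should really be read with $A_n$ and $B_n$ so normalized. Second, your caveat that the $B_n$ argument requires $A(t)\neq 0$ is well taken and is implicit in the paper's standing hypothesis (used in the proposition $A^2B=\kappa$) that $O\gamma(t)$ and $\dot\gamma(t)$ be non-collinear; the paper does not comment on this in the proof of the theorem either.

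So: same route as the paper, but you have been more careful than the paper itself about the normalization and the degeneracy at $A=0$.
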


\begin{proof}
This follows directly from the definition of density functions and
Proposition~\ref{properties}.
\end{proof}

It is interesting to investigate the inverse problem, it is still
open now, we formulate it in the last subsection.

\subsection{Example of curves and their continued fractions}

In this subsection we calculate the areal and angular densities
for straight lines, ellipses, and logarithmic spirals.

\begin{example}{\bf Lines.} Let us study the case of lines. Without lose
of generality we consider the point $O$ to be at the origin and
take the line $x=a$. Then the corresponding densities are
$$
A(t)=a \qquad \hbox{and} \qquad B(t)=0.
$$
\end{example}

\begin{example}{\bf Ellipses and their centers.}
Consider an ellipse $\frac{x^2}{a^2}+\frac{y^2}{b^2}=1$ with $a\ge
b>0$. Let $O$ be at the symmetry center of the ellipse i.e. at the
origin. Then the areal and angular densities are as follows
$$
A(t)=\frac{ab}{\sqrt{a^2\sin^2 t+ b^2\cos^2 t}} \qquad \hbox{and}
\qquad B(t)=\frac{1}{ab\sqrt{a^2\sin^2 t+ b^2\cos^2 t}}.
$$ Notice
that here we get the constant function for the ratio:
$$
\frac{A(t)}{B(t)}=a^2b^2.
$$
\end{example}

\vspace{1mm}

\begin{example}{\bf Ellipses and their foci.}
As in the previous example we consider an ellipse
$\frac{x^2}{a^2}+\frac{y^2}{b^2}=1$ with $a\ge b>0$. Let now $O$
be at one of the foci, for instance at $(-\sqrt{a^2-b^2},0)$. Then
the densities are as follows
$$
A(t)=\frac{ab+b\sqrt{a^2 -b^2} \cos t}{\sqrt{a^2\sin^2 t+
b^2\cos^2v t}} \quad \hbox{and} \quad
B(t)=\frac{a}{b\sqrt{a^2\sin^2 t +b^2 \cos^2 t} \big(a+ \cos t
\sqrt{a^2 -b^2} \big)^2}.
$$
\end{example}

{
\parindent=0cm
{\it Remark on the Kepler planetary motion.} If we put the Sun at
the chosen focus and consider a planet whose orbit is the ellipse,
then according to three Kepler laws the planet will move with
velocity $\lambda/A(t)$ at any $t$. Here the constant $\lambda$ is
defined from the third Kepler law: {\it the square of the orbital
period of a planet is directly proportional to the cube of the
semi-major axis of its orbit,} or in other words
$$
\frac{T^2}{a^3}=\frac{T^2_{e}}{a^3_e},
$$
where $T$ is the period for our orbit, and $T_e$ and $a_e$ are
respectively the period and the semi-major axis for the Earth.
Denote by $L$ the length of the ellipse, i.e.,
$$
L=4a\int\limits_{0}^{\pi/2}\sqrt{1-\left(1-\frac{b^2}{a^2}\right)\cos^2
t}\hbox{ } dt
$$
Since $T=|\lambda|\int_0^L |1/A(t)|dt$, we get
$$
\lambda=\pm \frac{T_{e}}{\int_0^L
|1/A(t)|dt}\left(\frac{a}{a_e}\right)^{\frac{3}{2}}.
$$
}

\vspace{1mm}

We skip a description for parabolas and hyperbolas, they are
similar to the case of ellipses.

\begin{example}{\bf Logarithmic spirals.}
Consider a logarithmic spiral
$$
\big\{\big(ae^{bt}\cos t,a e^{bt}\sin t\big)\big|t\in\r\big\}.
$$
Then the densities for this spiral are as follows
$$
A(t)=\frac{ae^{bt}}{\sqrt{b^2+1}} \quad \hbox{and} \quad
B(t)=\frac{e^{-3bt}\sqrt{b^2+1}}{a^3}.
$$
It is interesting to notice that for the spirals we have
$$
A^3(t)B(t)=\frac{1}{b^2+1},
$$
i.e., the products are constant functions.
\end{example}

Notice that if $A^2B$ is a constant function, then the curvature
is constant, and hence we get circles. What do we have if $AB$ (or
$A$) is constant?

\subsection{Open problems}

We conclude this section with two open problems concerning the
density functions. We start with a question on convergency that is
in some sense the inverse problem to Theorem~\ref{convergence}.

\begin{problem}
What properties should have the LLS-sequences of broken lines if
their sequence converges to certain curve.
\end{problem}

The second problem comes from Remark~\ref{cc} on closed broken
lines.
\begin{problem}
What are the conditions on the functions $A(t)$ and $B(t)$ for the
resulting curve $\gamma$ to be closed?
\end{problem}

\vspace{5mm}

\end{document}